\documentclass[11pt,reqno]{amsart}

\setlength{\textwidth}{6.3in} \setlength{\textheight}{9.25in}
\setlength{\evensidemargin}{0in} \setlength{\oddsidemargin}{0in}
\setlength{\topmargin}{-.3in}

\usepackage{graphicx}
\usepackage[table]{xcolor}
\usepackage{xspace,tikz}
\usepackage{amsmath,amsthm,amsfonts,amssymb,latexsym,mathrsfs,color,extarrows}
\usepackage{hyperref}

\newtheorem{theorem}{Theorem}
\newtheorem{corollary}[theorem]{Corollary}
\newtheorem{proposition}[theorem]{Proposition}

\newtheorem{definition}[theorem]{Definition}
\newtheorem{example}[theorem]{Example}

\newtheorem{problem}[theorem]{Problem}

\newcommand{\des}{{\rm des\,}}

\newcommand{\msn}{\mathfrak{S}_n}

\newcommand{\lrf}[1]{\lfloor #1\rfloor}

\newcommand{\mbn}{{\mathcal B}_n}

\newcommand{\Stirling}[2]{\genfrac{\{}{\}}{0pt}{}{#1}{#2}}

\linespread{1.25}

\title[Derivative polynomials]{Recurrences for the derivative polynomials for tangent and secant}
\author{Guo-Niu Han}
\address{I.R.M.A., UMR 7501, Universit\'e de Strasbourg et CNRS, 7 rue Ren\'e Descartes, F-67084 Strasbourg, France}
\email{guoniu.han@unistra.fr (G.-N.~Han)}
\author{Shi-Mei Ma}
\address{School of Mathematics and Statistics,
        Northeastern University at Qinhuangdao,
         Hebei 066000, P.R. China}
\email{shimeimapapers@163.com (S.-M. Ma)}
\subjclass[2010]{Primary 05A05; Secondary 33B10}
\begin{document}
\maketitle
\begin{abstract}
In this paper, we choose the derivative polynomials for tangent and secant as basis sets of polynomial space.
From this viewpoint, we first give an expansion of the derivative polynomials for tangent in terms of the derivative polynomials for secant,
and we then present a result in the reverse direction.
We also discuss the relationships between alternating derivative polynomials and Eulerian
polynomials. As applications, we give certain expansions of the
alternating derivative polynomials, which indicate that the
alternating derivative polynomials share more properties with the Chebyshev polynomials.
\bigskip

\noindent{\sl Keywords}: Derivative polynomials; Alternating derivative polynomials; Eulerian polynomials;
\end{abstract}
\date{\today}

%
\section{Introduction}
Let $\msn$ be the symmetric group of all permutations of $[n]$, where $[n]=\{1,2,\ldots,n\}$.
Let $\pi=\pi(1)\pi(2)\cdots\pi(n)\in\msn$.
We say that $\pi$ is {\it alternating} if $$\pi(1)>\pi(2)<\pi(3)>\cdots \pi(n).$$
In other words, $\pi(i)<\pi({i+1})$ if $i$ is even and $\pi(i)>\pi({i+1})$ if $i$ is odd.
Let $E_n$ denote the number of alternating permutations in $\msn$.
For instance, $E_4=\#\{2143,3142,3241,4132,4231\}$.
A famous result of Andr\'e~\cite{Andre79} says that
\begin{equation}\label{En}
 \sum_{n=0}^\infty E_n\frac{z^n}{n!}=\tan z+\sec z=1+z+\frac{z^2}{2!}+2\frac{z^3}{3!}+5\frac{z^4}{4!}+16\frac{z^5}{5!}+\cdots.
\end{equation}
Since Euler used~\eqref{En} as the definition of $E_n$, the numbers $E_n$ are called {\it Euler numbers}.
Note that
$$\sum_{n=0}^\infty E_{2n+1}\frac{z^{2n+1}}{(2n+1)!}=\tan z,~~\sum_{n=0}^\infty E_{2n}\frac{z^{2n}}{(2n)!}=\sec z.$$
For this reason the numbers $E_{2n+1}$ are sometimes called {\it tangent numbers} and the numbers $E_{2n}$ are called {\it secant numbers}.

It is clear that
 \begin{equation}\label{Differential00}
	\left\{
	\begin{array}{l}
\frac{\mathrm{d}}{\mathrm{d}\theta}\tan\theta=1+\tan^2\theta,\\
\frac{\mathrm{d}}{\mathrm{d}\theta}\sec\theta=\tan\theta \sec\theta.
	\end{array}
	\right.
	\end{equation}
Besides~\eqref{Differential00}, there are some differential systems can be also generated by trigonometric functions.
As illustrations, we list four examples.
\begin{example}
Setting $y(\theta)=\sec\theta$ and $z(\theta)=\tan\theta$, one has
 \begin{equation*}\label{Differential01}
	\left\{
	\begin{array}{l}
\frac{\mathrm{d}}{\mathrm{d}\theta}y(\theta)=y(\theta)z(\theta),\\
\frac{\mathrm{d}}{\mathrm{d}\theta}z(\theta)=y^2(\theta),
	\end{array}
	\right.
	\end{equation*}
which can be used to generate peak and left peak polynomials (see~\cite{Chen17,Ma121} for details).
\end{example}
\begin{example}
Let $x(\theta)=\left(\tan \theta+\sec \theta\right)^q$, $y(\theta)=\sec\theta$ and $z(\theta)=\tan\theta$, where $q$ is a given parameter. One has
\begin{equation*}\label{Differential02}
	\left\{
	\begin{array}{l}
\frac{\mathrm{d}}{\mathrm{d}\theta}x(\theta)=qx(\theta)y(\theta), \\
\frac{\mathrm{d}}{\mathrm{d}\theta}y(\theta)=y(\theta)z(\theta),\\
\frac{\mathrm{d}}{\mathrm{d}\theta}z(\theta)=y^2(\theta),
	\end{array}
	\right.
	\end{equation*}
which can be used to generate run polynomials of permutations (see~\cite[Section~3]{Ma2001}).
\end{example}
\begin{example}
Setting $x(\theta)=\sec \theta \left(\tan \theta+\sec \theta\right)$ and $y(\theta)=\tan\theta+\sec\theta$, one has
 \begin{equation*}\label{Differential03}
	\left\{
	\begin{array}{l}
\frac{\mathrm{d}}{\mathrm{d}\theta}x(\theta)=x(\theta)y(\theta),\\
\frac{\mathrm{d}}{\mathrm{d}\theta}y(\theta)=x(\theta),
	\end{array}
	\right.
	\end{equation*}
which can be used to generate Andr\'e polynomials of 0-1-2 increasing trees (see~\cite{Chen17,Dumont96}).
\end{example}
\begin{example}
Let $x(\theta)=\sec (2\theta)$ and $y(\theta)=2\tan (2\theta)$. One has
 \begin{equation*}\label{Differential03}
	\left\{
	\begin{array}{l}
\frac{\mathrm{d}}{\mathrm{d}\theta}x(\theta)=x(\theta)y(\theta),\\
\frac{\mathrm{d}}{\mathrm{d}\theta}y(\theta)=4x^2(\theta),
	\end{array}
	\right.
	\end{equation*}
which can be used to generate $\gamma$-coefficient polynomials of the types $A$ and $B$ Eulerian and Narayana polynomials (see~\cite{Ma14} for details).
\end{example}

In this paper, we focus on the derivative polynomials generated by~\eqref{Differential00}.
Note that
$$\frac{\mathrm{d}}{\mathrm{d}\theta}(\tan^n \theta)=n\tan^{n-1} \theta(1+\tan^2 \theta),$$
$$\frac{\mathrm{d}}{\mathrm{d}\theta}(\sec\theta\tan^n \theta)=\sec\theta(n\tan^{n-1}\theta+(n+1)\tan^{n+1}\theta).$$
The {\it derivative polynomials} for tangent and secant are respectively defined as follows:
$$\frac{\mathrm{d}^n}{\mathrm{d}\theta^n}\tan\theta=P_n(\tan \theta),~\frac{\mathrm{d}^n}{\mathrm{d}\theta^n}\sec\theta=\sec\theta~{Q}_n(\tan \theta).$$
The study of these polynomials was initiated by Knuth and Buckholtz~\cite{Knuth}. Recently, there has been much work on various properties and applications of derivative polynomials (see~\cite{Boyadzhiev07,Cvijovic09,Hoffman99,Josuat14,Ma12}).
 As discussed by Adamchik~\cite{Adamchik07}, Boyadzhiev~\cite{Boyadzhiev07}, Hoffman~\cite{Hoffman95,Hoffman99} and Qi~\cite{Qi2015},
 the derivative polynomials can be used to express some improper integrals and infinite series, including
Hurwitz zeta functions, Dirichlet $L$-series and series of powers of reciprocals of integers.

In~\cite{Knuth},
Knuth and Buckholtz noted that $$P_{2n+1}(0)=E_{2n+1},~{Q}_{2n}(0)=E_{2n},$$
and by the chain rule, they deduced that
\begin{equation}\label{DerivativeRecu}
P_{n+1}(x)=(1+x^2)\frac{\mathrm{d}}{\mathrm{d}x}P_n(x),~
{Q}_{n+1}(x)=(1+x^2)\frac{\mathrm{d}}{\mathrm{d}x}{Q}_n(x)+x{Q}_n(x),
\end{equation}
with $P_0(x)=x$ and ${Q}_0(x)=1$.
Carlitz and Scoville~\cite{Carlitz72} derived that
\begin{equation}\label{EGF}
P(x;z)=\sum_{n=0}^\infty P_n(x)\frac{z^n}{n!}=\frac{x+\tan z}{1-x\tan z},~{Q}(x;z)=\sum_{n=0}^\infty {Q}_n(x)\frac{z^n}{n!}=\frac{\sec z}{1-x\tan z}.
\end{equation}
In~\cite{Hoffman95}, Hoffman noted that
\begin{equation}\label{Pn1Qn1}
P_n(1)=2^n(P_n(0)+Q_n(0))=\left\{
                            \begin{array}{ll}
                              2^nQ_n(0)=2^{2k}E_{2k}, & \hbox{if $n=2k$ is even;} \\
                              2^nP_n(0)=2^{2k+1}E_{2k+1}, & \hbox{if $n=2k+1$ is odd.}
                            \end{array}
                          \right.
\end{equation}
Subsequently, Hoffman~\cite{Hoffman99} discussed the other particular values of derivative polynomials.
For example, Hoffman~\cite{Hoffman99} noted that the numbers ${Q}_n(1)$ are the Springer numbers of root systems of type $B_n$, which also count snakes of type $B_n$.
A {\it snake} of type $B_n$ is a signed permutation $\pi(1)\pi(2)\cdots\pi(n)$ of $B_n$ such that $0<\pi(1)>\pi(2)<\cdots\pi(n)$.
Setting $s_n={Q}_n(1)$, one has
\begin{equation*}\label{sn}
\sum_{n=0}^{\infty}s_n\frac{z^n}{n!}=\frac{1}{\cos z-\sin z}.
\end{equation*}
One of the main results of Hoffman is the following expression of $Q_n(1)$ (see~\cite[Theorem~3.1]{Hoffman99}):
\begin{equation}\label{Qn1Pn1}
Q_n(1)=-\sin\frac{n\pi}{2}+\sum_{k=0}^{\lrf{n/2}}\binom{n}{2k}(-1)^kP_{n-2k}(1).
\end{equation}
This paper is motivated by the following problem.
\begin{problem}\label{Problem1}
Are there some generalizations or variations of~\eqref{Qn1Pn1}?
\end{problem}

The Chebyshev polynomials $T_n(x)$ of the first kind are defined by
$$T_n(x)=\cos (n\theta),~~{\text{when $x=\cos(\theta)$}},$$
while the second kind Chebyshev polynomials $U_n(x)$ are defined by
$$U_n(x)=\frac{\sin((n+1)\theta)}{\sin(\theta)},~{\text{when $x=\cos(\theta)$}}.$$
It is well known that the polynomials $T_n(x)$ and $U_n(x)$
are both alternately even and odd, and there are some close connections between the Chebyshev polynomials of the first and second kinds, see~\cite{Mason2003} for details.
For example, the trigonometric identity
$$\sin((n+1)\theta)-\sin((n-1)\theta)=2\sin(\theta)\cos(n\theta)$$
leads to an identity: $U_n(x)-U_{n-2}(x)=2T_n(x)$.
It is well known that the Chebyshev polynomials form an orthogonal basis over the finite interval $[-1,1]$.

In~\cite{Hetyei}, Hetyei showed that derivative polynomials share some properties with Chebyshev polynomials, and
derivative polynomials are closely related to the face enumerating polynomials of the Chebyshev transforms of the Boolean algebras.
As an illustration, we give an example.
\begin{example}[{\cite[Corollary~8.7]{Hetyei}}]\label{Hetyei05}
The zeros of $P_n(x)$ and $Q_n(x)$ are pure imaginary, have multiplicity $1$, belong to the line segment $[-\mathrm{i},\mathrm{i}]$ and are interlaced.
\end{example}

Note that $P_n(-x)=(-1)^{n+1}P_n(x)$ and ${Q}_n(x)=(-1)^n{Q}_n(-x)$.
Then the polynomials $P_n(x)$ and $Q_n(x)$ are both alternately even and odd.
Below are $P_n(x)$ and $Q_n(x)$ for $n\leqslant 5$:
\begin{equation*}
\begin{split}
P_0(x)&=x,~P_1(x)=1+x^2,~P_2(x)=2x+2x^3,~P_3(x)=2+8x^2+6x^4,\\
P_4(x)&=16x+40x^3+24x^5,~P_5(x)=16+136x^2+240x^4+120x^6,\\
{Q}_0(x)&=1,~{Q}_1(x)=x,~{Q}_2(x)=1+2x^2,~{Q}_3(x)=5x+6x^3,\\
{Q}_4(x)&=5+28x^2+24x^4,~{Q}_5(x)=61x+180x^3+120x^5.
\end{split}
\end{equation*}

Motivated by Proposition~\ref{Hetyei05}, we shall explore the following problem.
\begin{problem}\label{Problem2}
If we choose the derivative polynomials for tangent or secant as basis sets of polynomial space, whether there are any interesting findings?
\end{problem}

In the next section, we present various results concerning Problems~\ref{Problem1} and~\ref{Problem2}. In Section~\ref{Section03},
we first introduce the definition of alternating derivative polynomials, and we then present certain combinatorial expansions of these polynomials.
\section{Recurrences for derivative polynomials and Euler numbers}\label{Section2}

Let $P(x;z)$ and ${Q}(x;z)$ be given by~\eqref{EGF}.
Hoffman~\cite[Eq.~(5),~Theorem~3.2]{Hoffman95} found that
\begin{equation*}
\begin{split}
\frac{\mathrm{d}}{\mathrm{d}z}P(x;z)&=1+P^2(x;z),  \\
\frac{\mathrm{d}}{\mathrm{d}z}P(x;z)&=(1+x^2){Q}^2(x;z),\\
\frac{\mathrm{d}}{\mathrm{d}z}{Q}(x;z)&=P(x;z){Q}(x;z),
\end{split}
\end{equation*}
which yield three convolution formulas:
\begin{equation*}
\begin{split}
P_{n+1}(x)&=\delta_{0n}+\sum_{i=0}^n\binom{n}{i}P_i(x)P_{n-i}(x), \\
P_{n+1}(x)&=(1+x^2)\sum_{i=0}^n\binom{n}{i}{Q}_i(x){Q}_{n-i}(x),\\
{Q}_{n+1}(x)&=\sum_{i=0}^n\binom{n}{i}P_i(x){Q}_{n-i}(x).
\end{split}
\end{equation*}
Comparing the last two convolution formula with~\eqref{DerivativeRecu}, we get the following result.
\begin{proposition}
We have
\begin{equation*}
\begin{split}
\frac{\mathrm{d}}{\mathrm{d}x}P_n(x)&=\sum_{i=0}^n\binom{n}{i}{Q}_i(x){Q}_{n-i}(x)~\text{for $n\geqslant 0$},\\
\frac{\mathrm{d}}{\mathrm{d}x}{Q}_n(x)&=\frac{1}{1+x^2}\sum_{i=1}^n\binom{n}{i}P_i(x){Q}_{n-i}(x)~\text{for $n\geqslant 1$}.
\end{split}
\end{equation*}
\end{proposition}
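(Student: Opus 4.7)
The plan is to obtain both identities by directly comparing Hoffman's convolution formulas (the last two displayed before the proposition) with the recurrences~\eqref{DerivativeRecu} of Knuth and Buckholtz. Nothing more subtle is needed, because the convolution formulas already express $P_{n+1}(x)$ and $Q_{n+1}(x)$ as sums of products of derivative polynomials, while \eqref{DerivativeRecu} express the same quantities in terms of derivatives. Matching the two descriptions of $P_{n+1}(x)$ and of $Q_{n+1}(x)$ yields the claimed formulas after elementary algebra.

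First I would handle the formula for $\frac{\mathrm{d}}{\mathrm{d}x}P_n(x)$. The second convolution formula reads
\[
P_{n+1}(x)=(1+x^2)\sum_{i=0}^n\binom{n}{i}Q_i(x)Q_{n-i}(x),
\]
while~\eqref{DerivativeRecu} gives $P_{n+1}(x)=(1+x^2)\tfrac{\mathrm{d}}{\mathrm{d}x}P_n(x)$. Since $1+x^2$ is a nonzero polynomial, dividing both expressions by it (as polynomials in $\mathbb{Q}[x]$) yields the first identity for every $n\geqslant 0$.

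Next I would handle the formula for $\frac{\mathrm{d}}{\mathrm{d}x}Q_n(x)$. Combining the third convolution formula with~\eqref{DerivativeRecu} gives
\[
(1+x^2)\frac{\mathrm{d}}{\mathrm{d}x}Q_n(x)+xQ_n(x)=\sum_{i=0}^n\binom{n}{i}P_i(x)Q_{n-i}(x).
\]
The key observation, and essentially the only substantive step, is to notice that the $i=0$ term on the right-hand side is $P_0(x)Q_n(x)=xQ_n(x)$ (since $P_0(x)=x$), which exactly cancels the $xQ_n(x)$ on the left. The sum therefore reduces to one starting at $i=1$, and dividing by $1+x^2$ produces the stated identity for $n\geqslant 1$.

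There is no real obstacle here beyond bookkeeping; the whole proposition is a direct algebraic consequence of combining results already recorded in the paper. The only place to be careful is the cancellation of the spurious $xQ_n(x)$ term via the value $P_0(x)=x$, which is why the second sum must be indexed from $i=1$ rather than $i=0$.
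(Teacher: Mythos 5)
Your argument is exactly the one the paper intends: the proposition is stated right after the sentence ``Comparing the last two convolution formulas with~\eqref{DerivativeRecu}, we get the following result,'' and your cancellation of the $xQ_n(x)$ term via $P_0(x)=x$ is precisely the bookkeeping that comparison requires. The proof is correct and takes essentially the same route as the paper.
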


Putting $P_{-1}(x)=1$, then the polynomials
$P_{-1}(x),~P_0(x),~P_1(x),~P_2(x),~\ldots,~P_n(x)$
form a basis for polynomials with degree less than or equal to $n+1$, since they have different degrees.
We can now present the following result.
\begin{theorem}\label{mainthm01}
For any $n\geqslant 0$, one has
\begin{equation}\label{eq:01}
Q_{2n}(x)=(-1)^n+\sum_{k=0}^{n-1}\binom{2n}{2k+1}(-1)^kP_{2n-2k-1}(x),
\end{equation}
\begin{equation}\label{eq:02}
Q_{2n+1}(x)=\sum_{k=0}^n\binom{2n+1}{2k+1}(-1)^kP_{2n-2k}(x).
\end{equation}
\end{theorem}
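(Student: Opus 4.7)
The plan is to derive both identities simultaneously from a single compact generating-function equation. Working from~\eqref{EGF}, the key step is to establish
\begin{equation*}
Q(x;z) = \cos z + P(x;z)\sin z.
\end{equation*}
Once this is in hand, extracting the coefficient of $z^m/m!$ on both sides immediately yields~\eqref{eq:01} when $m=2n$ and~\eqref{eq:02} when $m=2n+1$, so everything reduces to verifying this one formula.

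To verify the key identity I would compute directly from the closed forms in~\eqref{EGF}. Putting the right-hand side over the common denominator $\cos z - x\sin z$, the term $\cos z$ contributes $\cos^{2} z - x\sin z\cos z$, while
\begin{equation*}
P(x;z)\sin z \;=\; \frac{(x+\tan z)\sin z}{1-x\tan z} \;=\; \frac{x\sin z\cos z + \sin^{2} z}{\cos z - x\sin z}
\end{equation*}
contributes $x\sin z\cos z + \sin^{2} z$. The $x\sin z\cos z$ terms cancel and $\cos^{2} z + \sin^{2} z = 1$, so the sum equals $1/(\cos z - x\sin z)$; on the other hand $Q(x;z) = \sec z/(1-x\tan z) = 1/(\cos z - x\sin z)$, and the identity holds.

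For the coefficient extraction, expand $\cos z = \sum_{k\geqslant 0}(-1)^{k}z^{2k}/(2k)!$ and $\sin z = \sum_{k\geqslant 0}(-1)^{k}z^{2k+1}/(2k+1)!$, and apply the standard exponential convolution. The coefficient of $z^{m}/m!$ in $P(x;z)\sin z$ is $\sum_{2k+1\leqslant m}(-1)^{k}\binom{m}{2k+1}P_{m-2k-1}(x)$, while the coefficient of $z^{m}/m!$ in $\cos z$ is $(-1)^{m/2}$ if $m$ is even and $0$ if $m$ is odd. Setting $m=2n$ (where $k$ ranges over $0,\dots,n-1$) gives~\eqref{eq:01}, and setting $m=2n+1$ (where $k$ ranges over $0,\dots,n$) gives~\eqref{eq:02}.

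There is no real obstacle once the compact identity is located; the only part that takes genuine thought is discovering it. A conceptual shortcut that motivates it is the observation, via the tangent addition formula, that $P(x;z)=\tan(z+\alpha)$ and $Q(x;z)=\cos\alpha\,\sec(z+\alpha)$ when $\tan\alpha=x$, so that the target identity is nothing but the trigonometric decomposition $\cos\alpha = \cos((z+\alpha)-z) = \cos(z+\alpha)\cos z + \sin(z+\alpha)\sin z$ divided through by $\cos(z+\alpha)$.
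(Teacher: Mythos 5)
Your proof is correct and follows essentially the same route as the paper: both hinge on the identity $Q(x;z)=\cos z+P(x;z)\sin z$ (the paper's equation~\eqref{Qxz-Pxz}, which it verifies by a slightly different but equivalent algebraic manipulation of $\sec z/(1-x\tan z)$) and then extract coefficients of $z^{2n}/(2n)!$ and $z^{2n+1}/(2n+1)!$ exactly as you do. The closing remark interpreting the identity via $P(x;z)=\tan(z+\alpha)$ is a nice piece of motivation not present in the paper, but the proof itself is the same.
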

\begin{proof}
By using~\eqref{EGF}, we obtain
\begin{equation*}
\begin{split}
Q(x;z)&=\frac{\sin^2 (z)+\cos^2 (z)}{\cos (z)}\frac{1}{1-x\tan (z)} \\
&=\left(\cos (z)-x\sin (z)+x\sin (z)+\frac{\sin^2 (z)}{\cos (z)}\right)\frac{1}{1-x\tan (z)}\\
&=\frac{\cos (z)-x\sin (z)}{1-x\tan (z)}+\sin (z)\frac{x+\tan (z)}{1-x\tan (z)}.
\end{split}
\end{equation*}
Thus
\begin{equation}\label{Qxz-Pxz}
Q(x;z)=\cos(z) + \sin(z) P(x;z).
\end{equation}
So we get
\begin{align*}
	&\sum_{n=0}^\infty Q_{2n}(x) \frac{z^{2n}}{(2n)!}
	+ \sum_{n=0}^\infty Q_{2n+1}(x) \frac{z^{2n+1}}{(2n+1)!}\\
	&= \sum_{n=0}^\infty  \frac{(-1)^n z^{2n}}{(2n)! }
			+\sum_{k=0}^\infty \frac {(-1)^kz^{2k+1}}{(2k+1)!} \times \left(\sum_{n=1}^\infty \frac{P_{2n-1}(x)}{(2n-1)!} {z^{2n-1}}+\sum_{n=0}^\infty \frac{P_{2n}(x)}{(2n)!} {z^{2n}}
\right)
\end{align*}
Selecting the coefficients of $z^{2n}$ and $z^{2n+1}$, we get
$$\frac{{Q}_{2n}(x)}{(2n)!}=\frac{(-1)^n}{(2n)!}
		+\sum_{k=0}^{n-1}\frac {(-1)^k}{(2k+1)!} \frac{P_{2n-2k-1}(x)}{(2n-2k-1)!},$$
$$\frac{{Q}_{2n+1}(x)}{(2n+1)!}=\sum_{k=0}^n\frac {(-1)^k}{(2k+1)!}
		\frac{P_{2n-2k}(x)}{(2n-2k)!},$$
and so the proof is complete.
\end{proof}
\begin{example}
Now specialize to the case $n=2$ in Theorem~\ref{mainthm01}, we have
\begin{align*}
Q_4(x)&=5+28x^2+24x^4=4P_3(x)-4P_1(x)+1,\\
Q_5(x)&=61x+180x^3+120x^5=5P_4(x)-10P_2(x)+P_0(x).
\end{align*}
\end{example}
\begin{theorem}\label{thm04}
For any $m\geqslant 0$, we have
\begin{equation}\label{eq:03}
\sum_{k=0}^m\binom{2m}{2k}(-1)^kP_{2m-2k}(x)-x\sum_{k=0}^{m-1}\binom{2m}{2k+1}(-1)^kP_{2m-2k-1}(x)=(-1)^mx,
\end{equation}
\begin{equation}\label{eq:04}
x\sum_{k=0}^m\binom{2m+1}{2k+1}(-1)^kP_{2m-2k}(x)-\sum_{k=0}^m\binom{2m+1}{2k}(-1)^kP_{2m-2k+1}(x)=(-1)^{m+1}.
\end{equation}
\end{theorem}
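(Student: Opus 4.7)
The plan is to mirror the generating-function argument used to prove Theorem~\ref{mainthm01}, but now applied directly to $P(x;z)$ rather than to the relation between $P(x;z)$ and $Q(x;z)$. Starting from the formula in~\eqref{EGF} and multiplying numerator and denominator by $\cos z$, I would rewrite
\begin{equation*}
P(x;z) = \frac{\sin z + x\cos z}{\cos z - x\sin z},
\end{equation*}
which at once gives the master identity
\begin{equation*}
(\cos z - x\sin z)\, P(x;z) = \sin z + x\cos z. \tag{$\star$}
\end{equation*}
Both~\eqref{eq:03} and~\eqref{eq:04} should then drop out of $(\star)$ by reading off the coefficient of $z^{2m}/(2m)!$ and of $z^{2m+1}/(2m+1)!$.

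Next, I would Taylor-expand the linear factor: the coefficient $c_i$ of $z^i/i!$ in $\cos z - x\sin z$ equals $(-1)^{i/2}$ when $i$ is even and $-x\,(-1)^{(i-1)/2}$ when $i$ is odd. Applying the exponential Cauchy product on the left side of $(\star)$ gives
\begin{equation*}
\sum_{j=0}^m \binom{m}{j}\, c_{m-j}\, P_j(x),
\end{equation*}
while the right-hand side of $(\star)$ contributes $(-1)^{m/2}x$ when $m$ is even and $(-1)^{(m-1)/2}$ when $m$ is odd. Splitting the Cauchy sum according to the parity of $j$ (so that the even-$j$ terms pair with the $\cos z$-contribution and the odd-$j$ terms pair with the $-x\sin z$-contribution), and re-indexing by $k$ via $j=2m-2k$ or $j=2m-2k-1$, yields~\eqref{eq:03} verbatim in the case $m\mapsto 2m$. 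For $m\mapsto 2m+1$ the same procedure, with $j=2m+1-2k$ or $j=2m-2k$, gives the identity up to an overall sign, and multiplying through by $-1$ recasts $-(-1)^m$ as $(-1)^{m+1}$, producing~\eqref{eq:04}.

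The only real obstacle is bookkeeping: keeping track of the sign $(-1)^{m-l}$ that appears under the re-indexing $k = m-l$ of the Cauchy product, and using $\binom{m}{j}=\binom{m}{m-j}$ to convert the binomial coefficients into the shape $\binom{2m}{2k+1}$ or $\binom{2m+1}{2k}$ demanded by the statement. Once that is done carefully, no further input (beyond the generating function~\eqref{EGF}) is required, and the proof is a direct coefficient extraction.
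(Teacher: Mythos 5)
Your proposal is correct, and it proves the theorem by a genuinely different route than the paper. The paper derives \eqref{eq:03} and \eqref{eq:04} from Theorem~\ref{mainthm01} combined with the recurrence \eqref{DerivativeRecu}: it computes $Q_{2m+1}(x)$ (resp.\ $Q_{2m+2}(x)$) in two ways, once via $Q_{n+1}=(1+x^2)Q_n'+xQ_n$ applied to the expansion \eqref{eq:01} (resp.\ \eqref{eq:02}) and once via \eqref{eq:02} (resp.\ \eqref{eq:01}) directly, and the discrepancy between the two computations is exactly the claimed identity. Your argument instead clears the denominator in \eqref{EGF} to get $(\cos z - x\sin z)P(x;z)=\sin z + x\cos z$ and reads off the coefficients of $z^{2m}/(2m)!$ and $z^{2m+1}/(2m+1)!$; I checked the coefficient extraction and both cases come out exactly as in \eqref{eq:03} and \eqref{eq:04} (after the sign flip you note for the odd case). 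Your route is more self-contained, needing only the generating function and not Theorem~\ref{mainthm01}, and it parallels the proof of Theorem~\ref{mainthm01} itself, which rests on the companion identity $Q(x;z)=\cos(z)+\sin(z)P(x;z)$; the paper's route, by contrast, exhibits the theorem as a consistency condition linking the two halves of Theorem~\ref{mainthm01} through the derivative recurrence. One small slip in your prose: the statement that even-$j$ terms always pair with the $\cos z$-contribution holds only when the total degree is even; for the odd-degree case the parities swap, as your own re-indexing $j=2m+1-2k$ correctly reflects, so this is a wording issue rather than a gap.
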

\begin{proof}
$(i)$\quad Combining~\eqref{DerivativeRecu} and~\eqref{eq:01}, we obtain
\begin{equation*}
\begin{split}
Q_{2m+1}(x)&=(1+x^2)\frac{\mathrm{d}}{\mathrm{d}x}Q_{2m}(x)+xQ_{2m}(x)\\
&=\sum_{k=0}^{m-1}\binom{2m}{2k+1}(-1)^k(1+x^2)\frac{\mathrm{d}}{\mathrm{d}x}P_{2m-2k-1}(x)+xQ_{2m}(x)\\
&=\sum_{k=0}^{m-1}\binom{2m}{2k+1}(-1)^kP_{2m-2k}(x)+xQ_{2m}(x)\\
&=\sum_{k=0}^{m-1}\left(\binom{2m+1}{2k+1}-\binom{2m}{2k}\right)(-1)^kP_{2m-2k}(x)+xQ_{2m}(x)\\
&=\sum_{k=0}^{m-1}\binom{2m+1}{2k+1}(-1)^kP_{2m-2k}(x)-\sum_{k=0}^{m-1}\binom{2m}{2k}(-1)^kP_{2m-2k}(x)+xQ_{2m}(x)\\
&=Q_{2m+1}(x)-(-1)^mP_0(x)-\sum_{k=0}^{m-1}\binom{2m}{2k}(-1)^kP_{2m-2k}(x)+xQ_{2m}(x)\\
&=Q_{2m+1}(x)-\sum_{k=0}^{m}\binom{2m}{2k}(-1)^kP_{2m-2k}(x)+xQ_{2m}(x).
\end{split}
\end{equation*}
Hence
$$xQ_{2m}(x)-\sum_{k=0}^{m}\binom{2m}{2k}(-1)^kP_{2m-2k}(x)=0.$$
It follows from~\eqref{eq:01} that
$$(-1)^mx+x\sum_{k=0}^{m-1}\binom{2m}{2k+1}(-1)^kP_{2m-2k-1}(x)-\sum_{k=0}^{m}\binom{2m}{2k}(-1)^kP_{2m-2k}(x)=0,$$
and so we obtain~\eqref{eq:03}.

$(ii)$\quad Combining~\eqref{DerivativeRecu} and~\eqref{eq:02}, we get
\begin{equation*}
\begin{split}
Q_{2m+2}(x)&=(1+x^2)\frac{\mathrm{d}}{\mathrm{d}x}Q_{2m+1}(x)+xQ_{2m+1}(x)\\
&=\sum_{k=0}^m\binom{2m+1}{2k+1}(-1)^k\left((1+x^2)\frac{\mathrm{d}}{\mathrm{d}x}P_{2m-2k}(x)+xP_{2m-2k}(x)\right)\\
&=\sum_{k=0}^m\binom{2m+1}{2k+1}(-1)^k\left(P_{2m-2k+1}(x)+xP_{2m-2k}(x)\right).
\end{split}
\end{equation*}
Since $$\binom{2m+2}{2k+1}=\binom{2m+1}{2k+1}+\binom{2m+1}{2k},$$
we have
\begin{equation*}
\begin{split}
&Q_{2m+2}(x)\\
&=\sum_{k=0}^m\left(\binom{2m+2}{2k+1}-\binom{2m+1}{2k}\right)(-1)^kP_{2m-2k+1}(x)+x\sum_{k=0}^m\binom{2m+1}{2k+1}(-1)^kP_{2m-2k}(x)\\
&=\sum_{k=0}^m\binom{2m+2}{2k+1}(-1)^kP_{2m-2k+1}(x)+S_m(x),
\end{split}
\end{equation*}
where $$S_m(x)=x\sum_{k=0}^m\binom{2m+1}{2k+1}(-1)^kP_{2m-2k}(x)-\sum_{k=0}^m\binom{2m+1}{2k}(-1)^kP_{2m-2k+1}(x).$$
In view of~\eqref{eq:01}, we find $S_m(x)=(-1)^{m+1}$. This completes the proof
\end{proof}
Recall that $P_{2m-2k+1}(0)=E_{2m-2k+1}$ for any $m\geqslant 0$.
Now specialize to the case $x=0$ in~\eqref{eq:04}, one can get the following result.
\begin{corollary}
For any $m\geqslant 0$, we have
$$\sum_{k=0}^m\binom{2m+1}{2k}(-1)^kE_{2m-2k+1}=(-1)^m.$$
\end{corollary}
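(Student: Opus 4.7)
The plan is to prove this corollary by directly specializing equation~\eqref{eq:04} from Theorem~\ref{thm04} at $x = 0$. Since all of the hard work has been done in the preceding theorem, the main task is simply to verify that the two sides collapse correctly under this evaluation.

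First, I would set $x = 0$ in~\eqref{eq:04}. The first sum on the left-hand side, namely $x\sum_{k=0}^m\binom{2m+1}{2k+1}(-1)^kP_{2m-2k}(x)$, carries an explicit factor of $x$ and therefore vanishes immediately; no further analysis of the polynomials $P_{2m-2k}(x)$ is required. For the second sum, I would invoke the observation of Knuth and Buckholtz, recalled in the introduction, that $P_{2j+1}(0) = E_{2j+1}$ for all $j \geqslant 0$. Applied with $j = m-k$ this gives $P_{2m-2k+1}(0) = E_{2m-2k+1}$ for every $k$ in the summation range $0 \leqslant k \leqslant m$.

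Substituting these two observations into~\eqref{eq:04} yields
\[
-\sum_{k=0}^m \binom{2m+1}{2k}(-1)^k E_{2m-2k+1} = (-1)^{m+1},
\]
and multiplying both sides by $-1$ produces the claimed identity. The only thing worth double-checking is that the indexing range $0 \leqslant k \leqslant m$ is consistent: when $k = m$ the term is $\binom{2m+1}{2m}(-1)^m E_1 = (2m+1)(-1)^m$, which is exactly the $k=m$ contribution, so no boundary term is lost.

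There is essentially no obstacle in this argument; it is a one-line corollary once Theorem~\ref{thm04} is available. The only conceptual remark I would add is that the odd-indexed derivative polynomials $P_{2j+1}(x)$ have constant term equal to the tangent number $E_{2j+1}$ (and have no other odd power of $x$ in small degree), whereas the even-indexed $P_{2j}(x)$ have no constant term; this parity pattern is precisely why the evaluation at $x=0$ produces a clean identity involving only tangent numbers rather than a mix of tangent and secant numbers.
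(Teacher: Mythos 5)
Your proof is correct and is exactly the paper's argument: the authors also obtain the identity by recalling $P_{2m-2k+1}(0)=E_{2m-2k+1}$ and setting $x=0$ in~\eqref{eq:04}, where the first sum vanishes because of its explicit factor of $x$. Nothing further is needed.
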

Combining~\eqref{Pn1Qn1} and Theorem~\ref{thm04}, one can derive the following result.
\begin{corollary}
For any $m\geqslant 0$, we have
$$\sum_{k=0}^m\binom{2m}{2k}(-1)^k2^{2m-2k}E_{2m-2k}-\sum_{k=0}^{m-1}\binom{2m}{2k+1}(-1)^k2^{2m-2k-1}E_{2m-2k-1}=(-1)^m,$$
$$\sum_{k=0}^m\binom{2m+1}{2k+1}(-1)^k2^{2m-2k}E_{2m-2k}-\sum_{k=0}^m\binom{2m+1}{2k}(-1)^k2^{2m-2k+1}E_{2m-2k+1}=(-1)^{m+1}.$$
\end{corollary}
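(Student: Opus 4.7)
The proof plan is essentially to specialize both identities of Theorem~\ref{thm04} at $x=1$ and then translate the values $P_j(1)$ into Euler numbers via~\eqref{Pn1Qn1}. Since equations~\eqref{eq:03} and~\eqref{eq:04} are identities of polynomials, substitution of any particular value is legal, and $x=1$ produces identities whose right-hand sides remain $(-1)^m$ and $(-1)^{m+1}$ respectively, matching the target. The entire derivation is an evaluation plus a bookkeeping step.

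First I would set $x=1$ in~\eqref{eq:03}. This yields
$$\sum_{k=0}^{m}\binom{2m}{2k}(-1)^{k}P_{2m-2k}(1)-\sum_{k=0}^{m-1}\binom{2m}{2k+1}(-1)^{k}P_{2m-2k-1}(1)=(-1)^{m}.$$
Next I would invoke~\eqref{Pn1Qn1} in the two parities needed here: $P_{2j}(1)=2^{2j}E_{2j}$ and $P_{2j+1}(1)=2^{2j+1}E_{2j+1}$. Applied with $j=m-k$ in the first sum and $j=m-k-1$ (so that $2j+1=2m-2k-1$) in the second sum, this replaces $P_{2m-2k}(1)$ by $2^{2m-2k}E_{2m-2k}$ and $P_{2m-2k-1}(1)$ by $2^{2m-2k-1}E_{2m-2k-1}$, which is exactly the first claimed identity.

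For the second identity I would repeat the procedure on~\eqref{eq:04}. Setting $x=1$ there gives
$$\sum_{k=0}^{m}\binom{2m+1}{2k+1}(-1)^{k}P_{2m-2k}(1)-\sum_{k=0}^{m}\binom{2m+1}{2k}(-1)^{k}P_{2m-2k+1}(1)=(-1)^{m+1}.$$
Applying~\eqref{Pn1Qn1} once more with $P_{2m-2k}(1)=2^{2m-2k}E_{2m-2k}$ and $P_{2m-2k+1}(1)=2^{2m-2k+1}E_{2m-2k+1}$ produces the second identity verbatim.

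There is no real obstacle; the only point requiring minor care is tracking parities so that the exponents of $2$ and the indices of $E$ match the formulas in~\eqref{Pn1Qn1}. Because both parts follow from a direct substitution, the proof can be written in a few lines and no additional lemma is required.
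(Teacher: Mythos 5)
Your proposal is correct and is exactly the paper's intended derivation: the paper states the corollary by ``combining~\eqref{Pn1Qn1} and Theorem~\ref{thm04},'' which is precisely your substitution of $x=1$ into~\eqref{eq:03} and~\eqref{eq:04} followed by the conversion $P_{2j}(1)=2^{2j}E_{2j}$, $P_{2j+1}(1)=2^{2j+1}E_{2j+1}$. The parity bookkeeping you describe checks out in both identities.
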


The {\it Bernoulli numbers} $B_n$ can defined by the exponential generating function
$$\frac{z}{\mathrm{e}^z-1}=\sum_{n=0}^\infty B_n\frac{z^n}{n!}=1-\frac{z}{2}+\frac{1}{6}\frac{z^2}{2!}-\frac{1}{30}\frac{z^4}{4!}+\frac{1}{42}\frac{z^6}{6!}-\frac{1}{30}\frac{z^8}{8!}+\cdots.$$
In particular, $B_{2n+1}=0$ for $n\geqslant 1$, since $\frac{z}{2}\coth\left(\frac{z}{2}\right)$ is an even function and $$\frac{z}{2}\coth\left(\frac{z}{2}\right)=\frac{z}{2}\frac{\mathrm{e}^{z/2}+
\mathrm{e}^{-z/2}}{\mathrm{e}^{z/2}-\mathrm{e}^{-z/2}}=\frac{z}{\mathrm{e}^z-1}+\frac{z}{2}.$$
It is well known that the Bernoulli number has an explicit formula:
$$B_n=\sum_{k=0}^{n}\frac{(-1)^kk!}{k+1}\Stirling{n}{k},$$
where $\Stirling{n}{k}$ is the Stirling number of the second kind,
which counts the number of partitions
of $[n]=\{1,2,\ldots,n\}$ into $k$ nonempty blocks. The Bernoulli numbers appear often in the coefficients
of trigonometric functions (see~\cite[Chapter~6]{Graham} for details).
For example,
$$z\csc(z)=1+\sum_{n=1}^\infty (-1)^{n+1}(4^n-2)B_{2n}\frac{z^{2n}}{(2n)!},~z\cot(z)=\sum_{n=0}^\infty (-4)^nB_{2n}\frac{z^{2n}}{(2n)!} ~\quad~~(0<|z|<\pi).$$

We can now present a result in the reverse direction of Theorem~\ref{mainthm01}.
\begin{theorem}\label{mainthm02}
For any $n\geqslant 0$, one has
\begin{align*}
(2n+1)P_{2n}(x)&=Q_{2n+1}(x)+\sum_{i=0}^{n-1}\binom{2n+1}{2i+2}(-1)^{i+2}(4^{i+1}-2)B_{2i+2}Q_{2n-2i-1}(x),\\
(2n+2)P_{2n+1}(x)&=Q_{2n+2}(x)+\sum_{i=0}^n\binom{2n+2}{2i+2}(-1)^{i+2}(4^{i+1}-2)B_{2i+2}Q_{2n-2i}(x)-(-4)^{n+1}B_{2n+2}.
\end{align*}
\end{theorem}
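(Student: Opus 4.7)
The plan is to mimic the proof of Theorem~\ref{mainthm01}, but inverting the roles of $P(x;z)$ and $Q(x;z)$ by clearing the factor $\sin(z)$. The key identity established there, namely
$$\sin(z)\, P(x;z) \;=\; Q(x;z)-\cos(z),$$
expresses $P(x;z)$ implicitly in terms of $Q(x;z)$. To make this explicit while avoiding the singularity of $1/\sin(z)$ at $z=0$, I will multiply both sides by $z/\sin(z)$ to obtain
$$z\,P(x;z)\;=\;z\csc(z)\,Q(x;z)\;-\;z\cot(z).$$

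The next step is to substitute the two Bernoulli-number expansions already recalled in the paper,
$$z\csc(z)=1+\sum_{n=1}^{\infty}(-1)^{n+1}(4^{n}-2)B_{2n}\frac{z^{2n}}{(2n)!},\qquad z\cot(z)=\sum_{n=0}^{\infty}(-4)^{n}B_{2n}\frac{z^{2n}}{(2n)!},$$
together with $Q(x;z)=\sum_{k\ge 0}Q_{k}(x)z^{k}/k!$, and then take the Cauchy product on the right. The coefficient of $z^{m+1}$ on the left is simply $P_{m}(x)/m!$, which after multiplying by $(m+1)!$ produces the desired factors $(2n+1)$ and $(2n+2)$ in the statement.

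From here the argument splits by the parity of $m$. When $m=2n$, the power $z^{2n+1}$ is odd, so the contribution of $z\cot(z)$ vanishes and only the convolution with $z\csc(z)$ survives; separating the $k=0$ term ($a_{0}=1$) gives the leading $Q_{2n+1}(x)$, and reindexing $k=i+1$ in the remaining sum produces exactly
$$(2n+1)P_{2n}(x)=Q_{2n+1}(x)+\sum_{i=0}^{n-1}\binom{2n+1}{2i+2}(-1)^{i+2}(4^{i+1}-2)B_{2i+2}\,Q_{2n-2i-1}(x).$$
When $m=2n+1$, the power $z^{2n+2}$ is even, so the $z\cot(z)$ series contributes $-(-4)^{n+1}B_{2n+2}$ (after multiplying by $(2n+2)!$), and the same convolution-plus-reindexing step produces the second identity.

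No conceptual obstacle is anticipated; the main technical point to be careful with is the double reindexing (shifting $k\mapsto i+1$ so that the $(-1)^{k+1}$ of the $z\csc(z)$ series becomes $(-1)^{i+2}$, and matching the binomial $\binom{2n+2}{2k}$ with $\binom{2n+2}{2i+2}$), along with verifying that the $k=n+1$ term in the even case correctly absorbs the $Q_{0}(x)=1$ contribution without overlapping the separated $-(-4)^{n+1}B_{2n+2}$ coming from $z\cot(z)$. The rest of the proof is a routine comparison of coefficients.
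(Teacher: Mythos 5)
Your proposal is correct and follows essentially the same route as the paper: the authors likewise rewrite the identity $Q(x;z)=\cos(z)+\sin(z)P(x;z)$ as $zP(x;z)=z\csc(z)\,Q(x;z)-z\cot(z)$, insert the stated Bernoulli expansions of $z\csc(z)$ and $z\cot(z)$, and extract the coefficients of $z^{2n+1}$ and $z^{2n+2}$. The reindexing and parity bookkeeping you flag are exactly the (routine) steps carried out in the paper's proof.
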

\begin{proof}
From~\eqref{Qxz-Pxz}, we see that $P(x;z)=\csc (z) Q(x;z) -\cot (z)$. Thus
\begin{equation*}\label{Qxz-Pxz02}
zP(x;z)=z\csc (z) Q(x;z) -z\cot (z).
\end{equation*}
Then we have
\begin{align*}
	&\sum_{n=0}^\infty P_{2n}(x) \frac{z^{2n+1}}{(2n)!}
	+ \sum_{n=0}^\infty P_{2n+1}(x) \frac{z^{2n+2}}{(2n+1)!}\\
	&=\left(1+\sum_{i=0}^\infty (-1)^{i+2}(4^{i+1}-2)B_{2i+2}\frac{z^{2i+2}}{(2i+2)!}\right)\left(\sum_{n=0}^\infty Q_{2n}(x) \frac{z^{2n}}{(2n)!}
	+ \sum_{n=0}^\infty Q_{2n+1}(x) \frac{z^{2n+1}}{(2n+1)!}\right)-\\
&\sum_{n=0}^\infty (-4)^nB_{2n}\frac{z^{2n}}{(2n)!}.
\end{align*}
Equating the coefficients of $z^{2n+1}$ and $z^{2n+2}$, we get
$$\frac{{P}_{2n}(x)}{(2n)!}=\frac{Q_{2n+1}(x)}{(2n+1)!}
		+\sum_{i=0}^{n-1}\frac {(-1)^{i+2}(4^{i+1}-2)B_{2i+2}}{(2i+2)!} \frac{Q_{2n-2i-1}(x)}{(2n-2i-1)!},$$
$$		\frac{{P}_{2n+1}(x)}{(2n+1)!}=\frac{Q_{2n+2}(x)}{(2n+2)!}
+\sum_{i=0}^{n}\frac {(-1)^{i+2}(4^{i+1}-2)B_{2i+2}}{(2i+2)!} \frac{Q_{2n-2i}(x)}{(2n-2i)!}-\frac{(-4)^{n+1}B_{2n+2}}{(2n+2)!},$$
and so the proof is complete.
\end{proof}
\begin{example}
Now specialize to the case $n=2$ in Theorem~\ref{mainthm02}, we have
\begin{align*}
&Q_{5}(x)+\sum_{i=0}^{1}\binom{5}{2i+2}(-1)^{i+2}(4^{i+1}-2)B_{2i+2}Q_{3-2i}(x)\\
&=Q_{5}(x)+10(4-2)B_2Q_3(x)-5(4^2-2)B_4Q_1(x)\\
&=61x+180x^3+120x^5+\frac{20}{6}(5x+6x^3)+\frac{7}{3}x\\
&=5P_4(x),\\
&Q_{6}(x)+\sum_{i=0}^2\binom{6}{2i+2}(-1)^{i+2}(4^{i+1}-2)B_{2i+2}Q_{4-2i}(x)-(-4)^{3}B_{6}\\
&=Q_{6}(x)+30B_2Q_4(x)-210B_4Q_2(x)+62B_6+64B_6\\
&=61+662x^2+1320x^4+720x^6+5(5+28x^2+24x^4)+7(1+2x^2)+3\\
&=6P_5(x).
\end{align*}
\end{example}

Taking $x=1$ in Theorem~\ref{mainthm02} leads to the following result,
which gives connections among tangent numbers, secant numbers, Springer numbers and Bernoulli numbers.
\begin{corollary}
For any $n\geqslant 0$, one has
\begin{align*}
(2n+1)2^{2n}E_{2n}&=s_{2n+1}+\sum_{i=0}^{n-1}\binom{2n+1}{2i+2}(-1)^{i+2}(4^{i+1}-2)B_{2i+2}s_{2n-2i-1},\\
(2n+2)2^{2n+1}E_{2n+1}&=s_{2n+2}+\sum_{i=0}^n\binom{2n+2}{2i+2}(-1)^{i+2}(4^{i+1}-2)B_{2i+2}s_{2n-2i}-(-4)^{n+1}B_{2n+2}.
\end{align*}
\end{corollary}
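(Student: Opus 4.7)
The plan is to obtain this corollary as a direct specialization of Theorem~\ref{mainthm02} at $x=1$, so no new analytic input is needed; the work consists entirely of translating the two identities of Theorem~\ref{mainthm02} into the language of tangent, secant, Springer and Bernoulli numbers.

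First I would substitute $x=1$ into each of the two identities
\begin{align*}
(2n+1)P_{2n}(x)&=Q_{2n+1}(x)+\sum_{i=0}^{n-1}\binom{2n+1}{2i+2}(-1)^{i+2}(4^{i+1}-2)B_{2i+2}Q_{2n-2i-1}(x),\\
(2n+2)P_{2n+1}(x)&=Q_{2n+2}(x)+\sum_{i=0}^n\binom{2n+2}{2i+2}(-1)^{i+2}(4^{i+1}-2)B_{2i+2}Q_{2n-2i}(x)-(-4)^{n+1}B_{2n+2}.
\end{align*}
On the right-hand sides, every value $Q_m(1)$ is, by the definition $s_m:=Q_m(1)$ recorded just before equation~\eqref{Qn1Pn1}, equal to the Springer number $s_m$. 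This rewriting is purely notational and turns the right-hand sides into exactly the right-hand sides claimed in the corollary.

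Next I would handle the left-hand sides using Hoffman's evaluation~\eqref{Pn1Qn1}, namely
\[
P_{2k}(1)=2^{2k}E_{2k},\qquad P_{2k+1}(1)=2^{2k+1}E_{2k+1}.
\]
Specializing the first identity at $x=1$ gives $(2n+1)P_{2n}(1)=(2n+1)2^{2n}E_{2n}$, matching the claimed left-hand side, while specializing the second gives $(2n+2)P_{2n+1}(1)=(2n+2)2^{2n+1}E_{2n+1}$, again matching the claim.

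There is essentially no hard step here: the only possible obstacle is a bookkeeping check that the summation ranges and the sign conventions in Theorem~\ref{mainthm02} are copied verbatim into the corollary, and that the final lone Bernoulli term $-(-4)^{n+1}B_{2n+2}$ in the even case survives the substitution unchanged (which it does, since it is independent of $x$). After these verifications the two identities of the corollary follow immediately, completing the proof.
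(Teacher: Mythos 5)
Your proposal is correct and is exactly the paper's argument: the paper derives this corollary simply by setting $x=1$ in Theorem~\ref{mainthm02}, using $s_m=Q_m(1)$ and Hoffman's evaluation~\eqref{Pn1Qn1} for $P_{2n}(1)$ and $P_{2n+1}(1)$. Nothing further is needed.
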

Putting $x=0$ in Theorem~\ref{mainthm02} leads to the following result.
\begin{corollary}
For any $n\geqslant 0$, one has
$$E_{2n+2}=(2n+2)E_{2n+1}-\sum_{i=0}^n\binom{2n+2}{2i+2}(-1)^{i+2}(4^{i+1}-2)B_{2i+2}E_{2n-2i}+(-4)^{n+1}B_{2n+2}.$$
\end{corollary}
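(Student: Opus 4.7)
The plan is to specialize the second identity of Theorem~\ref{mainthm02} at $x=0$ and simply rearrange, using the base evaluations of the derivative polynomials recalled in the introduction.

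First I would recall from Knuth--Buckholtz (stated in the introduction) that
\[
P_{2n+1}(0)=E_{2n+1},\qquad Q_{2n}(0)=E_{2n},
\]
so that the constant terms of $P_{2n+1}(x)$ and of every even-indexed $Q_{2n-2i}(x)$ appearing on the right-hand side of Theorem~\ref{mainthm02} are precisely Euler numbers. (This is consistent with the parities $P_n(-x)=(-1)^{n+1}P_n(x)$ and $Q_n(x)=(-1)^nQ_n(-x)$: $P_{2n+1}$ and $Q_{2n}$ are the ones carrying constant terms.)

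Next I would take the identity
\[
(2n+2)P_{2n+1}(x)=Q_{2n+2}(x)+\sum_{i=0}^n\binom{2n+2}{2i+2}(-1)^{i+2}(4^{i+1}-2)B_{2i+2}\,Q_{2n-2i}(x)-(-4)^{n+1}B_{2n+2}
\]
from Theorem~\ref{mainthm02} and evaluate both sides at $x=0$. The left side becomes $(2n+2)E_{2n+1}$; on the right, $Q_{2n+2}(0)=E_{2n+2}$ and $Q_{2n-2i}(0)=E_{2n-2i}$ for each $0\leqslant i\leqslant n$. Isolating $E_{2n+2}$ yields the asserted formula.

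There is no real obstacle here, since the entire content of Theorem~\ref{mainthm02} has already been established; the only point worth verifying is that no odd-indexed $Q$ slips into the right-hand side (which would vanish at $0$ and be invisible in the final statement). Indeed, the indices $2n-2i$ and $2n+2$ are all even, so the substitution $x=0$ is clean and the rearrangement produces exactly
\[
E_{2n+2}=(2n+2)E_{2n+1}-\sum_{i=0}^n\binom{2n+2}{2i+2}(-1)^{i+2}(4^{i+1}-2)B_{2i+2}E_{2n-2i}+(-4)^{n+1}B_{2n+2},
\]
completing the proof.
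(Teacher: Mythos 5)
Your proposal is correct and is exactly the paper's argument: the paper obtains this corollary by setting $x=0$ in the second identity of Theorem~\ref{mainthm02} and using $P_{2n+1}(0)=E_{2n+1}$, $Q_{2n}(0)=E_{2n}$. Nothing further is needed.
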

\section{Eulerian polynomials and alternating derivative polynomials}\label{Section03}
Let $\msn$ denote the symmetric group of all permutations of $[n]$.
A {\it descent} of $\pi\in\msn$ is an index $i\in [n-1]$ such that $\pi(i)>\pi(i+1)$.
Denote by $\des(\pi)$ the number of descents of $\pi$.
The {\it Eulerian polynomials of type $A$} are defined by
$$A_n(x)=\sum_{\pi\in\msn}x^{\des(\pi)+1}.$$
Let $\pm[n]=[n]\cup\{\overline{1},\ldots,\overline{n}\}$, where $\overline{i}=-i$.
Denote by $\mbn$ the hyperoctahedral group of rank $n$. Elements of $\mbn$ are signed
permutations $\sigma$ of the set $\pm[n]$ such that $\sigma\left(-i\right)=-\sigma(i)$ for all $i$.
Let $\sigma=\sigma(1)\sigma(2)\cdots\sigma(n)\in \mbn$.
The {\it type $B$ Eulerian polynomials} are defined by
\begin{equation*}
B_n(x)=\sum_{\sigma\in\mbn}x^{\des_B(\sigma)},
\end{equation*}
where $\des_B(\sigma)=\#\{i\in [0,n-1]:~\sigma(i)>\sigma({i+1}),~\sigma(0)=0\}$.
For $n\geqslant 1$, they satisfy the following recurrence relations (see~\cite{Brenti94,Chow08,Petersen15} for instance):
 \begin{equation}\label{ABnx-recu}
	\left\{
	\begin{array}{l}
A_{n}(x)=nxA_{n-1}(x)+x(1-x)\frac{\mathrm{d}}{\mathrm{d}x}A_{n-1}(x),~A_0(x)=1\\
B_{n}(x)=(2nx+1-x)B_{n-1}(x)+2x(1-x)\frac{\mathrm{d}}{\mathrm{d}x}B_{n-1}(x),~B_0(x)=1.
	\end{array}
	\right.
	\end{equation}

The derivative polynomials for hyperbolic
tangent and secant are defined by
\begin{equation*}\label{derivapoly-2}
\frac{d^n}{d\theta^n}\tanh(\theta)=\widetilde{P}_n(\tanh(\theta))\quad {\text and}\quad
\frac{d^n}{d\theta^n}\operatorname{sech}(\theta)=\operatorname{sech}(\theta)\widetilde{Q}_n(\tanh(\theta)).
\end{equation*}
Let $\mathrm{i}=\sqrt{-1}$.
Since $\tanh(\theta)=-\mathrm{i}\tan(\mathrm{i}\theta)$ and $\operatorname{sech}(\theta)=\sec(\mathrm{i}\theta)$, we have
\begin{equation*}\label{pnxqnx}
\widetilde{P}_n(x)=\mathrm i^{n-1}P_n(\mathrm i x) \quad
{\text and}\quad \widetilde{Q}_n(x)=\mathrm i^{n}Q_n(\mathrm i x).
\end{equation*}
Using~\eqref{DerivativeRecu}, we obtain that
 \begin{equation}\label{pqnx-recu02}
	\left\{
	\begin{array}{l}
\widetilde{P}_{n+1}(x)=(1-x^2)\frac{\mathrm{d}}{\mathrm{d}x}\widetilde{P}_n(x),~\widetilde{P}_0(x)=x;\\
\widetilde{Q}_{n+1}(x)=(1-x^2)\frac{\mathrm{d}}{\mathrm{d}x}\widetilde{Q}_n(x)-x\widetilde{Q}_n(x),~\widetilde{Q}_0(x)=1.
	\end{array}
	\right.
	\end{equation}
\begin{definition}
The alternatingly derivative polynomials for tangent and secant are respectively defined as follows:
$$p_n(x)=(-1)^n\widetilde{P}_{n}(x)=(-1)^n\mathrm{i}^{n-1}P_n(\mathrm{i}x)~~{\text{for $n\geqslant 1$,~$p_0(x)=x$}},$$
$$q_n(x)=(-1)^n2^n\widetilde{Q}_{n}(x)=(-1)^n2^n\mathrm i^{n}Q_n(\mathrm i x)~~{\text{for $n\geqslant 0$,~$q_0(x)=1$}}.$$
\end{definition}
Let
\begin{equation*}
P_n(x)=\sum_{k=0}^{\lfloor(n+1)/2\rfloor}p(n,n-2k+1)x^{n-2k+1},~Q_n(x)=\sum_{k=0}^{\lfloor {n/2}\rfloor}q(n,n-2k)x^{n-2k}.
\end{equation*}
It is easy to verify that
\begin{equation*}
p_n(x)=\sum_{k=0}^{\lfloor(n+1)/2\rfloor}p(n,n-2k+1)(-1)^kx^{n-2k+1},~q_n(x)=2^n\sum_{k=0}^{\lfloor {n/2}\rfloor}q(n,n-2k)(-1)^kx^{n-2k}.
\end{equation*}

Using~\eqref{pqnx-recu02}, we get
 \begin{equation}\label{abnx-recu}
	\left\{
	\begin{array}{l}
p_{n+1}(x)=(x^2-1)\frac{\mathrm{d}}{\mathrm{d}x}{p}_n(x),~p_0(x)=x;\\
q_{n+1}(x)=2(x^2-1)\frac{\mathrm{d}}{\mathrm{d}x}q_n(x)+2xq_n(x),~q_0(x)=1.
	\end{array}
	\right.
	\end{equation}
It should be noted that the distribution of zeros of $p_n(x)$ and $q_n(x)$ has been given by Hetyei~\cite[Theorem~8.6]{Hetyei}.
Comparing~\eqref{ABnx-recu} and~\eqref{abnx-recu}, it is routine to verify the following result,
which is equivalent to~\cite[Theorem~5,~Theorem~6]{Franssens07}. For the sake of brevity we omit the proof.
\begin{theorem}\label{propo-pq}
For $n\geqslant 1$, we have
\begin{equation*}\label{anxbnx-def}
p_n(x)=(x+1)^{n+1}A_{n}\left(\frac{x-1}{x+1}\right),\quad
q_n(x)=(x+1)^nB_n\left(\frac{x-1}{x+1}\right).
\end{equation*}
\end{theorem}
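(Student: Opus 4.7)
The plan is an induction on $n$ via the change of variable $u=(x-1)/(x+1)$. Setting
$$\widehat{p}_n(x):=(x+1)^{n+1}A_n(u),\qquad \widehat{q}_n(x):=(x+1)^n B_n(u),$$
my goal is to show that $\widehat{p}_n,\widehat{q}_n$ satisfy the same initial conditions and recurrences \eqref{abnx-recu} as $p_n,q_n$; then by uniqueness they must coincide.

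The base cases are immediate: $\widehat{q}_0(x)=B_0(u)=1=q_0(x)$, and since $A_1(x)=x$ follows from $A_0=1$ and \eqref{ABnx-recu}, one has $\widehat{p}_1(x)=(x+1)^2 u = x^2-1=p_1(x)$. For the inductive step I would first record the substitution identities
$$\frac{\mathrm{d}u}{\mathrm{d}x}=\frac{2}{(x+1)^2},\qquad 1-u=\frac{2}{x+1},\qquad u(1-u)=\frac{2(x-1)}{(x+1)^2},$$
together with the scaling formulas $(x+1)^{n+1}u=(x-1)(x+1)^n$ and $(x+1)^{n+1}\cdot 2u(1-u)=4(x-1)(x+1)^{n-1}$.

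With these in hand, one computes $(x^2-1)\widehat{p}_n'(x)$ by the product and chain rules, and separately computes $(x+1)^{n+2}A_{n+1}(u)$ by applying the type-$A$ recurrence $A_{n+1}(u)=(n+1)uA_n(u)+u(1-u)A_n'(u)$ and distributing $(x+1)^{n+2}$ across each term; both sides collapse to $(n+1)(x-1)(x+1)^{n+1}A_n(u)+2(x-1)(x+1)^n A_n'(u)$, giving $(x^2-1)\widehat{p}_n'(x)=\widehat{p}_{n+1}(x)$. The same strategy for $\widehat{q}_n$ uses the type-$B$ recurrence $B_{n+1}(u)=(2(n+1)u+1-u)B_n(u)+2u(1-u)B_n'(u)$; the key simplification is
$$(x+1)^{n+1}\bigl((2n+1)u+1\bigr)=(x+1)^n\bigl((2n+1)(x-1)+(x+1)\bigr)=2\bigl((n+1)x-n\bigr)(x+1)^n,$$
which matches the $B_n(u)$ coefficient obtained from $2(x^2-1)\widehat{q}_n'(x)+2x\widehat{q}_n(x)=[2n(x-1)+2x](x+1)^n B_n(u)+4(x-1)(x+1)^{n-1}B_n'(u)$.

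The main (and essentially only) obstacle is the arithmetic bookkeeping in the $B$-case, where four contributions---two from $2(x^2-1)\widehat{q}_n'(x)$, one from $2x\widehat{q}_n(x)$, against the $B_n(u)$ and $B_n'(u)$ coefficients of $(x+1)^{n+1}B_{n+1}(u)$---must be balanced as a polynomial identity in $x$. Once this is verified, the induction closes and the theorem follows.
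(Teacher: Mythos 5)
Your proof is correct and is exactly the argument the paper intends: the authors state that the result follows "by comparing \eqref{ABnx-recu} and \eqref{abnx-recu}" and omit the routine verification, which is precisely the induction via $u=(x-1)/(x+1)$ that you carry out. Your base cases ($n=1$ for $p_n$, since the identity fails at $n=0$, and $n=0$ for $q_n$) and the coefficient matching in both the $A$- and $B$-recurrences all check out.
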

In the past decades, Eulerian polynomials have been extensively studied, see~\cite{Petersen15} and references therein.
Using Theorem~\ref{propo-pq}, one can derive several properties of alternating derivative polynomials,
including generating functions, particular values, explicit formulas and combinatorial expansions.
In the sequel we shall give two applications of Theorem~\ref{propo-pq}.

The most famous expansion of Eulerian polynomials is the {\it Frobenius formula}:
\begin{equation}\label{Frobenius01}
A_n(x)=x\sum_{k=0}^nk!\Stirling{n}{k}(x-1)^{n-k}.
\end{equation}
Brenti~\cite[Theorem~3.14]{Brenti94} found a generalization of~\eqref{Frobenius01}:
\begin{equation}\label{Frobenius02}
B_n(x)=\sum_{k=0}^nk!\sum_{i=k}^n\binom{n}{i}2^i\Stirling{i}{k}(x-1)^{n-k}.
\end{equation}
Therefore, the following result follows from~\eqref{Frobenius01},~\eqref{Frobenius02} and Theorem~\ref{propo-pq}.
\begin{corollary}
For $n\geqslant 1$, we have
\begin{align*}
p_n(x)&=(x-1)\sum_{k=0}^n(-2)^{n-k}k!\Stirling{n}{k}(x+1)^k,\\
q_n(x)&=\sum_{k=0}^n(-2)^{n-k}k!\sum_{i=k}^n\binom{n}{i}2^i\Stirling{i}{k}(x+1)^{k}.
\end{align*}
\end{corollary}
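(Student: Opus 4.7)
The plan is to prove both identities by a single substitution argument. Theorem~\ref{propo-pq} expresses $p_n$ and $q_n$ as Eulerian polynomials evaluated at $\frac{x-1}{x+1}$, scaled by a power of $(x+1)$, and the Frobenius formulas~\eqref{Frobenius01} and~\eqref{Frobenius02} expand $A_n$ and $B_n$ as polynomials in $(x-1)$. The key observation that drives everything is the identity $\frac{x-1}{x+1} - 1 = \frac{-2}{x+1}$, which converts a power $\left(\frac{x-1}{x+1} - 1\right)^{n-k}$ into $\left(\frac{-2}{x+1}\right)^{n-k} = \frac{(-2)^{n-k}}{(x+1)^{n-k}}$. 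This is exactly what is needed for the $(x+1)^{n}$ prefactor to collapse the denominator and leave a clean power $(x+1)^k$.

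First I would substitute~\eqref{Frobenius01} into $p_n(x) = (x+1)^{n+1} A_n\!\left(\frac{x-1}{x+1}\right)$. The leading factor $y$ in $A_n(y) = y\sum_{k} k!\Stirling{n}{k}(y-1)^{n-k}$ evaluated at $y = \frac{x-1}{x+1}$ absorbs one power of $(x+1)$ out of $(x+1)^{n+1}$ and produces the factor $(x-1)$ outside the sum. The remaining $(x+1)^n$ distributes across each summand $\left(\frac{-2}{x+1}\right)^{n-k}$ to yield $(-2)^{n-k}(x+1)^k$, giving precisely the claimed expression for $p_n(x)$.

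Next, the same mechanism applied to $q_n(x) = (x+1)^n B_n\!\left(\frac{x-1}{x+1}\right)$ with~\eqref{Frobenius02} yields the second identity. Here there is no leading factor to peel off from $B_n$, so $(x+1)^n$ multiplied by $\left(\frac{-2}{x+1}\right)^{n-k}$ directly becomes $(-2)^{n-k}(x+1)^k$. The inner sum $\sum_{i=k}^n \binom{n}{i}2^i\Stirling{i}{k}$ is independent of the variable and simply passes through the substitution unchanged.

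The only obstacle, such as it is, is purely a matter of bookkeeping: ensuring that the exponents of $(x+1)$ balance so that the surviving power is $(x+1)^k$ rather than an off-by-one variant, and that the sign emerges as $(-2)^{n-k}$ rather than $2^{n-k}$ or $(-1)^{n-k}2^{n-k}$. Because the whole argument reduces to algebraic substitution into previously established identities, no deeper difficulty is anticipated, which is consistent with the author's remark that the corollary follows directly from~\eqref{Frobenius01},~\eqref{Frobenius02}, and Theorem~\ref{propo-pq}.
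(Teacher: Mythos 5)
Your proposal is correct and is exactly the argument the paper intends: the corollary is stated as following directly from substituting the Frobenius expansions \eqref{Frobenius01} and \eqref{Frobenius02} into Theorem~\ref{propo-pq}, using $\frac{x-1}{x+1}-1=\frac{-2}{x+1}$ so that the prefactors $(x+1)^{n+1}$ and $(x+1)^n$ clear the denominators and leave $(-2)^{n-k}(x+1)^k$. The bookkeeping of exponents and signs checks out in both cases.
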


Let $f(x)=\sum_{i=0}^nf_ix^i$ be a symmetric polynomial, i.e., $f_i=f_{n-i}$ for any $0\leqslant i\leqslant n$. Then $f(x)$ can be expanded uniquely as
$f(x)=\sum_{k=0}^{\lrf{\frac{n}{2}}}\gamma_kx^k(1+x)^{n-2k}$, and it is said to be {\it $\gamma$-positive}
if $\gamma_k\geqslant 0$ for $0\leqslant k\leqslant \lrf{\frac{n}{2}}$.
The $\gamma$-positivity provides a natural approach to study symmetric and unimodal polynomials,
see~\cite{Athanasiadis17} and references therein.

Let $\pi\in\msn$.
An index $i\in [n]$ is a {\it peak} (resp.~{\it double descent})
of $\pi$ if $\pi(i-1)<\pi(i)>\pi(i+1)$ (resp. $\pi(i-1)>\pi(i)>\pi(i+1)$), where $\pi(0)=\pi(n+1)=0$.
Let $a(n,k)$ be the number of permutations in $\msn$ with $k$ peaks and without double descents.
Foata and Sch\"utzenberger~\cite{Foata70} discovered that
\begin{equation}\label{Anx-gamma}
A_n(x)=\sum_{k=1}^{\lrf{({n+1})/{2}}}a(n,k)x^k(1+x)^{n+1-2k}.
\end{equation}
Following~\cite{Chow08,Petersen15}, one has
\begin{equation}\label{Bnx-gamma}
B_n(x)=\sum_{k=0}^{\lrf{{n}/{2}}}4^kb(n,k)x^k(1+x)^{n-2k},
\end{equation}
where $b(n,k)=\#\{\pi\in\msn:~~i\in[n-1],~\pi(i-1)<\pi(i)>\pi(i+1),~\pi(0)=0\}$.
Substituting~\eqref{Anx-gamma} and~\eqref{Bnx-gamma} into Theorem~\ref{propo-pq},
we deduce the following.
\begin{corollary}\label{pnqn}
For $n\geqslant 1$, we have
$$p_n(x)=\sum_{k=1}^{\lrf{({n+1})/{2}}}a(n,k)(x^2-1)^{k}(2x)^{n+1-2k},$$
$$q_n(x)=\sum_{k=0}^{\lrf{{n}/{2}}}4^kb(n,k)(x^2-1)^k(2x)^{n-2k}.$$
\end{corollary}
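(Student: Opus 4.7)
The plan is to perform a direct substitution into the identities of Theorem \ref{propo-pq}, which express $p_n(x)$ and $q_n(x)$ in terms of $A_n$ and $B_n$ evaluated at $(x-1)/(x+1)$. First I would record the two elementary algebraic identities that will do all the work: the normalization $1 + \frac{x-1}{x+1} = \frac{2x}{x+1}$, which sends any factor of the form $(1+y)^m$ arising in a $\gamma$-expansion to $(2x)^m/(x+1)^m$ after the substitution $y = (x-1)/(x+1)$; and the identity $(x-1)^k(x+1)^k = (x^2-1)^k$, which absorbs the leftover factors of $(x+1)$ into the desired $(x^2-1)^k$.

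For the expansion of $p_n(x)$, I would substitute the Foata--Sch\"utzenberger formula \eqref{Anx-gamma} into $p_n(x) = (x+1)^{n+1}\, A_n\!\left(\frac{x-1}{x+1}\right)$. The $k$-th summand becomes $a(n,k) \cdot \frac{(x-1)^k}{(x+1)^k} \cdot \frac{(2x)^{n+1-2k}}{(x+1)^{n+1-2k}}$, and multiplying by the outside factor $(x+1)^{n+1}$ leaves exactly one factor $(x+1)^k$, which combines with $(x-1)^k$ to give $(x^2-1)^k$. Summing over $k$ yields the claimed formula.

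For $q_n(x)$, the same procedure applies to the type $B$ $\gamma$-expansion \eqref{Bnx-gamma}. After substituting into $q_n(x) = (x+1)^n\, B_n\!\left(\frac{x-1}{x+1}\right)$, the outside factor $(x+1)^n$ cancels $(x+1)^{n-k}$ from the denominator, again leaving a factor $(x+1)^k$ to pair with $(x-1)^k$, while the constants $4^k b(n,k)$ are carried through unchanged. There is no real obstacle: the argument is pure algebraic manipulation, and the only thing to verify is that the summation ranges on both sides agree, which is automatic from the original $\gamma$-expansions.
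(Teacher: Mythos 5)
Your proposal is correct and is exactly the argument the paper intends: it derives the corollary by substituting the $\gamma$-expansions \eqref{Anx-gamma} and \eqref{Bnx-gamma} into Theorem~\ref{propo-pq} and simplifying via $1+\frac{x-1}{x+1}=\frac{2x}{x+1}$ and $(x-1)^k(x+1)^k=(x^2-1)^k$. The bookkeeping of the powers of $(x+1)$ and the summation ranges is handled correctly, so nothing is missing.
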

The Chebyshev polynomials can be defined in terms of the sums (see~\cite{Mason2003} for instance):
\begin{equation}\label{TU}
T_n(x)=\sum_{k=0}^{\lrf{n/2}}\binom{n}{2k}(x^2-1)^kx^{n-2k},~
U_n(x)=\sum_{k=0}^{\lrf{n/2}}\binom{n+1}{2k+1}(x^2-1)^kx^{n-2k}.
\end{equation}
Therefore, Corollary~\ref{pnqn} can be seen as a dual of~\eqref{TU},
which indicate that the alternating derivative polynomials deserve more work.
\section*{Acknowledgements.}
This work is supported by NSFC (12071063).

\end{document}